\newcommand{\RR}{\mathbb{R}}
\newcommand{\QQ}{\mathbb{Q}}
\newcommand{\ZZ}{\mathbb{Z}}
\newcommand{\CC}{\mathbb{C}}
\renewcommand{\phi}{\varphi}
\newcommand*{\vect}[1]{\overrightharp{\ensuremath{#1}}}
\DeclareMathOperator{\mult}{mult}
\DeclareMathOperator{\Cone}{Cone}
\DeclareMathOperator{\Hom}{Hom}
\DeclareMathOperator{\Span}{Span}
\DeclareMathOperator{\prim}{prim}
\newcommand{\introthmname}{}
\newtheorem{introthminn}{\introthmname}
\newenvironment{introthm}[1]
  {\renewcommand{\introthmname}{#1}\begin{introthminn}}
  {\end{introthminn}}
\theoremstyle{definition}
\newtheorem{corollary}{Corollary}[section]
\newtheorem{definition}{Definition}[section]
\newtheorem{construction}{Construction}[section]
\newtheorem{example}{Example}[section]
\newtheorem{proposition}{Proposition}[section]
\newtheorem{remark}{Remark}[section]
\numberwithin{equation}{section}
\definecolor{DarkRed}{RGB}{200,20,20}
\definecolor{DarkGreen}{RGB}{0,120,0}
\definecolor{SkyBlue}{rgb}{0.16, 0.32, 0.75}
\title{Rational tensegrities through the lens of toric geometry}
\author{Fatemeh Mohammadi and Xian Wu}
\date{}
\begin{document}
\begin{abstract}
A classical tensegrity model consists of an embedded graph in a vector space with rigid bars representing edges, and an assignment of a stress to every edge such that at every vertex of the graph the stresses sum up to zero. 
The tensegrity frameworks have been recently extended from the two
dimensional graph case to the multidimensional setting. We study the multidimensional tensegrities using tools from toric geometry. For a given rational tensegrity framework $\mathcal{F}$, we construct a glued toric surface $X_\mathcal{F}$. We show that the abelian group of tensegrities on $\mathcal{F}$ is isomorphic to a subgroup of the Chow group $A^1(X_\mathcal{F};\QQ)$. In the case of planar frameworks, we show how to explicitly carry out the computation of tensegrities via classical tools in toric geometry.
\end{abstract}
\maketitle

\vspace{-5mm}

\section{Introduction}
This work is concerned with the development of new connections between the multidimensional tensegrity frameworks, toric varieties, and their Chow groups.

\subsection{Tensegrity frameworks.}\label{higher}
A classical tensegrity model \cite{maxwell1864xlv} consists of an embedded graph in a vector space $\RR^d$ with rigid bars as edges, and a balancing condition at each vertex, which gives a stable structure (see, e.g., \cite{roth1981tensegrity, connelly1996second,connelly2013tensegrity}). Tensegrities have a wide range of applications in different areas of modern science and engineering technology (see, e.g., \cite{motro2003tensegrity, roth1981tensegrity, juan2008tensegrity, zhang2015tensegrity}). 
The notion of tensegrity has also been developed in higher dimensions (see, e.g., \cite{karpenkov2021geometric,karpenkov2022equilibrium,rybnikov1999stresses, rybnikov2000polyhedral}). Some theories about the existence of tensigrities and stratifications are recently developed in \cite{karpenkov2021frame,doray2010geometry}.  In this paper, we focus on the multidimensional tensegrities introduced in \cite{karpenkov2022equilibrium}, and we examine their structures over $\QQ$ or $\ZZ$ from the algebraic geometry perspective.

\smallskip
We now define the main object of this paper, the multidimensional tensegrity framework (see, \cite{karpenkov2022equilibrium} for more details).
    Let $N\cong\ZZ^d$ be a lattice and $N_\RR= N\otimes_\ZZ\RR\cong\RR^d$. A \emph{$k$-framework} $\mathcal{F}=(E,F,I,{\bf n})$ in $N_\RR$ consists of the following data:
    \begin{itemize}
        \item a collection $E$ of $(k-1)$-dimensional affine subspaces in $\RR^d$; 
        \item a collection $F$ of $k$-dimensional affine subspaces in $\RR^d$;
        \item a subset $I\subset\{(p,q)\in E\times F\mid\  p\subset q\}$;
        \item a function $\bf{n}$ assigning to each pair $(e,f)$ in $I$, a vector ${\bf n}(e,f)$ in $f$ normal to $e$, which is mapped to the primitive generator of the lattice $N/(N\cap e)$ under $\pi_e:N\rightarrow N/(N\cap e)$. \\
        See \Cref{framework}.
    \end{itemize} 
The elements of $E$, $F$ and $I$ are called \emph{edges}, \emph{faces}, and \emph{incidences}, respectively. 
A \emph{stress} $w$ on $\mathcal{F}$ is a function $w:F\rightarrow\QQ$. In particular, $w$ is called a \emph{self-stress} if for every $e\in E$, we have:
\begin{equation}\label{selfstress}
    \sum_{(e,f)\in I}{\bf n}(e,f)w(f)=0. 
    \end{equation}
The collection of self-stresses forms an abelian group $A_\mathcal{F}$ under addition. Moreover, $\mathcal{F}$ is called a \emph{tensegrity} if there exists a nonzero self-stress on it.

\subsection{Toric varieties and Chow groups.}\label{sec:Chow}
A normal algebraic variety $X$ is \emph{toric} if there exists a $(\CC^*)^n$-action on $X$ with an open dense orbit isomorphic to the torus $T=(\CC^*)^n$. Toric varieties form an important family of varieties in algebraic geometry, mainly because they are linked to the theory of lattices, polytopes and polyhedral fans. Moreover, their geometric properties are encoded as combinatorial invariants of their corresponding polytopes. In some sense, toric varieties are the easiest objects to deal with in algebraic geometry, and they can be used in the study of arbitrary varieties via degeneration techniques \cite{anderson2013okounkov,alexeev2002complete, bossinger2021families}. 
The standard references for toric geometry are \cite{oda1983convex,fulton1993introduction} and \cite{cox2011toric}.

\smallskip
To define the Chow group, consider the lattice of cocharacters $N=\Hom(\CC^*,T)\cong\ZZ^n$ and the lattice of characters $M=\Hom(N,\ZZ)$. Any complete fan $\Sigma$ in $N_\RR=N\otimes_\ZZ\RR$ uniquely determines a toric variety $X_\Sigma$. The Chow group of $k$-dimensional algebraic cycles, denoted by $A_k(X_\Sigma;\QQ)$ is described in \cite[Proposition~1.1]{fulton1997intersection}. Let $\Sigma^k$ be the set of cones in $\Sigma$ of codimension $k$. The $T$-invariant closed subvariety associated to $\sigma\in\Sigma^k$ is denoted by $V(\sigma)$. Then $A_k(X_\Sigma;\QQ)$ is generated by the rational equivalent classes $[V(\sigma)]$ where $\sigma$ runs over $\Sigma^k$, and the relations are given by
\begin{equation}\label{rel}
    \sum_{\sigma\in\Sigma^k,\ \sigma\supset\tau}\langle m,n_{\sigma,\tau}\rangle[V(\sigma)]=0,
\end{equation}
for every $\tau\in\Sigma^{k+1}$ and all $m\in M(\tau)=\tau^\perp\cap M$. Here $n_{\sigma,\tau}$ is a lattice point in $\sigma$ whose image generates the 1-dimensional lattice $N_\sigma/N_\tau$, where $N_\sigma$ and $N_\tau$ are sublattices of $N$ generated by $N\cap\sigma$ and $N\cap\tau$, respectively.

\smallskip

On the dual side, the \emph{operational Chow cohomology ring} is defined as  $A^\bullet(X_\Sigma;\QQ)=\oplus_k A^k(X_\Sigma;\QQ)$ (see \cite[Chapter 17]{fulton2013intersection}). When $X_\Sigma$ is $\QQ$-factorial, or equivalently, every cone in $\Sigma$ is simplicial, one can identify $A^k(X_\Sigma;\QQ)$ with $A_{\dim(X_\Sigma)-k}(X_\Sigma;\QQ)$. By \cite[Theorem 3]{fulton1994intersection}, there is an isomorphism
\begin{equation}\label{dual}
    A^k(X_\Sigma;\QQ)\cong\Hom_\QQ(A_k(X_\Sigma;\QQ),\QQ).
\end{equation}
Moreover, the Chow cohomology group $A^k(X_\Sigma;\QQ)$ is isomorphic to the group of Minkowski weights on $\Sigma^k$. See \cite[Theorem~2.1]{fulton1997intersection}. 
In particular, a $\QQ$-valued function $c$ on $\Sigma^k$ is a \emph{Minkowski weight} if it satisfies the following balancing condition:
\begin{equation}\label{Minkowski}
        \sum_{\sigma\in\Sigma^k,\ \sigma\supset\tau}\langle m,n_{\sigma,\tau}\rangle c(\sigma)=0,
    \end{equation}
for every $\tau\in\Sigma^{k+1}$ and $m$ in the lattice $M(\tau)$.

\subsection{Outline and our results.} 
In \Cref{higherandtoric}, we first construct a toric variety for every edge in $\mathcal{F}$, and then glue their associated polytopes along a proper choice of faces to obtain a polytope for $\mathcal{F}$. Then we use this polytope to construct a toric variety $X_{\mathcal{F}}$ associated to $\mathcal{F}$ (see Construction~\ref{const} and \Cref{assembled}).
Our main goal is to prove the following theorem which relates the Chow group of $X_{\mathcal{F}}$ from \Cref{sec:Chow}, and the abelian group of self-stresses $A_{\mathcal{F}}$ from \Cref{higher}.

\begin{introthm}{Theorem}\label{thm:main}
Consider a framework $\mathcal{F}$, and let $A_\mathcal{F}$ be the group of self-stresses on $\mathcal{F}$. Let $X_{\mathcal{F}}$ be a glued toric variety from Construction~\ref{const} and \Cref{assembled}, and let $A^1(X_{\mathcal{F}};\QQ)_\mathcal{F}$ be the subgroup of the Chow group $A^1(X_{\mathcal{F}};\QQ)$ with cocycles vanishing on the reference rays. 
Then we have that:
    $$
        A^1(X_{\mathcal{F}};\QQ)_\mathcal{F}\cong A_\mathcal{F}.
    $$
\end{introthm}
In \Cref{planar}, we focus on the classical tensegrity model of planar graphs. In \Cref{sec:graph}, we construct a polyhedral fan for any planar graph, and equip that with an irreducible toric variety. This enables us to explicitly compute the corresponding Chow rings in \Cref{sec:computation}, and so tensegrities, using Stanley-Reisner ideals. We also provide a computational example (see Example~\ref{ex:running}).

\smallskip
\noindent{\bf Acknowledgement.}
The first author would like to express her gratitude to the organizers of the Fields Institute Thematic Program on Geometric Constraint Systems, Framework Rigidity, and Distance Geometry, for introducing her to the subject, and for many helpful conversations. She would also like to thank James Cruickshank, Anthony Nixon, and Shin-ichi Tanigawa for helpful discussions during the project \cite{cruickshank2022global}.
The authors would like to thank Oleg Karpenkov for helpful discussions. The authors were partially supported by the 
FWO grants G0F5921N (Odysseus programme), G023721N, and BOF/STA/201909/038. 

\section{Glued toric varieties associated to multidimensional frameworks}\label{higherandtoric}
Throughout we fix a multidimensional framework $\mathcal{F}$ as defined in \Cref{higher}, which is the integral version of \cite[Definition~2.1]{karpenkov2022equilibrium}. We assume that $\mathcal{F}$ is a \emph{rational} framework, i.e.~each element in $E$ or $F$ contains infinitely many rational points. 
We also assume that $\mathcal{F}$ is \emph{generic}, i.e.~all $f$'s
with $(e, f)\in I$ are distinct for a fixed $e\in E$. Moreover, for any $e\in E$, we have that $\#\{f\mid(e,f)\in I\}\geqslant 3$ if it is nonzero. 

\medskip

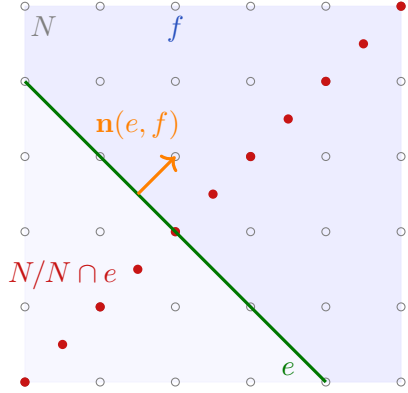
\begin{figure}[h]
    \centering
    \begin{tikzpicture}[scale=0.5]
    \filldraw[blue!10, opacity=0.7] (8,0) -- (10,0) -- (10,10) -- (0,10) -- (0,8) -- cycle;
    \filldraw[blue!10, opacity=0.3] (8,0) -- (0,0) -- (0,8) --  cycle;
    \foreach \i in {0,2,...,10}
      \foreach \j in {0,2,...,10}{
        \draw [gray] (\i,\j) circle(3pt);
        };
     \foreach \i in {0,...,10}
      \foreach \j in {0,...,10}{
        \ifnum \i = \j
            \fill[DarkRed] (\i,\j) circle(3.5pt);
        \fi
      };
     \draw [very thick, DarkGreen] (8,0) -- (0,8);
     \draw [very thick, orange, ->] (3,5) -- (4,6);
     \node [below] at (4,10) {\color{SkyBlue}$f$};
     \node [below] at (7,0.8) {\color{DarkGreen}$e$};
     \node [below] at (1,3.5) {\color{DarkRed}$N/N\cap e$};
     \node [below] at (0.5,10) {\color{gray}$N$};
     \node [below] at (3,7.5) {\color{orange}${\bf n}(e,f)$};
  \end{tikzpicture}
  \caption{Local picture of a framework.}
    \label{framework}
\end{figure}

We now explain our method to associated a toric variety to any given framework $\mathcal{F}$. 

\begin{construction}[The toric variety $X_{\mathcal{F}}$]\label{const} We first construct a toric variety for every edge in $\mathcal{F}$, and then glue their associated polygons to obtain a polytope, and hence a toric variety for $\mathcal{F}$.
\smallskip

More precisely, for each $e$, locally, in a neighborhood $U_e=e\times\Delta$, where $\Delta$ is a small ball of dimension $(n-k+1)$, we contract $U_e$ along $e$. Extending rays, we obtain a polyhedral fan $\Sigma_e'\subset\RR^2$ centered at the image of $e$ under the contraction, whose rays are contractions of $\{f\mid(e,f)\in I\}$, and dimension 2 cones are naturally cut by rays. Then, we complete the fan, if necessary, as follows. If all the rays $\rho_1,\ldots,\rho_s$ lie on the same half-plane, then we add a new ray $\rho_0$ generated by $-\sum_{i=1}^s v_i$, where $v_i$ is the primitive generator of $\rho_i$. Including the two $2$-dimensional cones containing $\rho_0$, we obtain a complete fan $\Sigma_e$. We call the ray $\rho_0$ an \emph{assistant ray}. All these operations are canonical with respect to $\mathcal{F}$. We now proceed with our construction as follows:
\begin{itemize}
    \item Let $X_{\Sigma_e}$ be the complete toric variety associated to $\Sigma_e$. Note that $X_{\Sigma_e}$ is projective since $\dim X_{\Sigma_e}=2$. We choose an ample line bundle $L_e$ on $X_{\Sigma_e}$, equivalently, a polygon $P_e$ whose normal fan is $\Sigma_e$.
    \item If $(e,f)$ and $(e',f)$ are both in $I$, and $l_f,l_{f'}$ are edges in $P_e,P_{e'}$ normal to the contraction of $f$, respectively, then we choose a bijection $\phi_f^{e,e'}$ between $l_f$ and $l_{f'}$, and glue the polygons $P_e,P_{e'}$ via $\phi_f^{e,e'}$ along the edges $l_f,l_{f'}$. We denote $\Phi_F^E$ for the set of all bijections $\phi_f^{e,e'}$.
\end{itemize}
We call the pair $\mathcal{P}:=(\cup_{e\in E}P_e,\Phi_F^E)$  the \emph{glued polygon}. We associate a variety $X$ to $\mathcal{P}$, where each component $X_{\Sigma_e}$ and the gluing is given by $\Phi_F^E$. Note that, the two non-canonical steps in the construction may lead to  multiple varieties $X$ (associated to $\mathcal{F}$), however by \Cref{rel}, the Chow groups are the same. Hence, we write $A^1(X_\mathcal{F};\QQ)$ for a choice of $X_\mathcal{F}$ associated to $\mathcal{P}$.
\end{construction}

\begin{example}

\begin{figure}[h]
\begin{tikzpicture}
    \draw [thick, SkyBlue] (1.2,-0.2) -- (1.2,3.8) -- (3.2,1.8) -- cycle;
    \draw [thick, SkyBlue] (1,0) -- (1,4) -- (3,3) -- (2.6,2.4);
    \draw [thick, SkyBlue] (1,0) -- (1.2,0.3);
    \draw [thick, SkyBlue, dashed] (2.6,2.4) -- (1.2,0.3);
    \draw [thick, SkyBlue] (-1.2,-0.2) -- (-1.2,3.8) -- (-3.2,1.8) -- cycle;
    \draw [thick, SkyBlue] (-1,0) -- (-1,4) -- (-3,3) -- (-2.6,2.4);
    \draw [thick, SkyBlue] (-1,0) -- (-1.2,0.3);
    \draw [thick, SkyBlue, dashed] (-2.6,2.4) -- (-1.2,0.3);
    \node [right] at (-1,2.2) {$l_{f_1}$};
    \node [left] at (-1.2,2) {$l_{f_2}$};
    \node [right] at (1.2,2) {$l_{f_3}$};
    \node [left] at (1.1,2.2) {$l_{f_4}$};
\end{tikzpicture}\hspace{1.5cm}
\begin{tikzpicture}
    \draw [thick, SkyBlue] (0,0) -- (0,4) -- (2,2) -- cycle;
    \draw [thick, SkyBlue] (0,0) -- (0,4) -- (2,3) -- (1.6,2.4);
    \draw [thick, SkyBlue, dashed] (1.6,2.4) -- (0,0);
    \draw [thick, SkyBlue] (0,0) -- (0,4) -- (-2,2) -- cycle;
    \draw [thick, SkyBlue] (0,0) -- (0,4) -- (-2,3) -- (-1.6,2.4);
    \draw [thick, SkyBlue, dashed] (-1.6,2.4) -- (0,0);
\end{tikzpicture}
\caption{A realizable glued polygon in $\RR^3$.}
    \label{gluedpoly}
\end{figure}
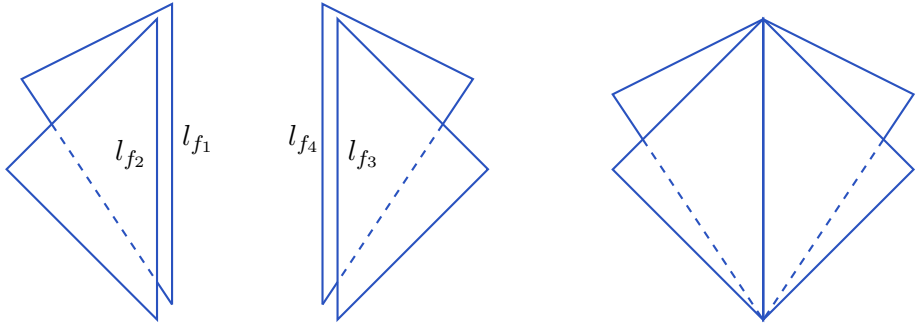
Let $\mathcal{F}$ be the following framework. Fix the points
\begin{displaymath}
  \begin{split}
    B_1 & = (1,1,0),\\
    B_5 & = (2,2,1),\\
  \end{split}
\quad\quad
  \begin{split}
    B_2 & = (-1,1,0),\\
    B_6 & = (-2,2,1),\\
  \end{split}
  \quad\quad
  \begin{split}
B_3 & = (-1,-1,0),\\
B_7 & = (-2,-2,1),\\
\end{split}
\quad\quad
  \begin{split}
    B_4 & = (1,-1,0),\\
    B_8 & = (2,-2,1),\\
  \end{split}\end{displaymath}
\begin{center}
    and $B_i=B_{i-4}-(0,0,2)$ for $i=9,\ldots,12$.
\end{center} 
Denote $B_iB_j$ for the line through any pair of points $B_i$ and $B_j$, and $B_iB_jB_kB_\ell$ for the half-plane containing any collection of four coplanar points $B_i,B_j,B_k,B_\ell$. Set
\begin{displaymath}
  \begin{split}
    e_1 & = B_1B_2,\\
  \end{split}
\quad\quad
  \begin{split}
    e_2 & = B_2B_3,\\
  \end{split}
  \quad\quad
  \begin{split}
e_3 & = B_3B_4,\\
\end{split}
\quad\quad
  \begin{split}
    e_4 & = B_4B_1,\\
  \end{split}\end{displaymath}
\begin{center}
    and $e_i=\begin{cases}
    B_iB_{i-4},i=5,\ldots,8;\\
    B_iB_{i-8},i=9,\ldots,12.
    \end{cases}$ 
\end{center} 
We also set
\begin{displaymath}
  \begin{split}
   f_0 & = B_1B_2B_3B_4,\\
   f_4 & = B_4B_1B_5B_8,\\
  \end{split}
\quad\quad
  \begin{split}
    f_1 & = B_1B_2B_6B_5,\\
    f_5 & = B_1B_2B_{10}B_9,\\
  \end{split}
  \quad\quad
  \begin{split}
f_2 & = B_2B_3B_7B_6,,\\
f_6 & = B_2B_3B_{11}B_{10},\\
\end{split}
\quad\quad
  \begin{split}
 f_3 & = B_3B_4B_8B_7,\\
   f_7 & = B_3B_4B_{12}B_{11},\\
  \end{split}
  \end{displaymath}
  \begin{center}
    and $f_8=B_4B_1B_9B_{12}$.
\end{center} 
 Note that the subset $I$ (from \Cref{{higher}}) consists of all $(e,f)$'s with $e\in\{e_1,\ldots,e_4\}$. 
Moreover, the vectors ${\bf n}(e,f)$'s are pointing towards the interior of the labelled quadrilaterals. \Cref{gluedpoly} shows a choice of the glued polygon realizable in $\RR^3$.
\end{example}

\begin{figure}[h]
    \centering
\begin{tikzpicture}[scale=0.7]
    \filldraw[blue!10, opacity=0.5]  (0,0) -- (-1,-1) -- (1,-1) -- (3,0) -- (3,2) -- (-3,2) -- (-3,0) -- (-1.3,-0.7) -- (-0.7,-0.1) -- (-2,1) -- (2,1) -- cycle;
    \draw [thick, SkyBlue] (0,0) -- (-1,-1) -- (1,-1) -- cycle;
    \draw [thick, SkyBlue] (0,0) -- (2,1) -- (3,0) -- (1,-1);
    \draw [thick, SkyBlue] (2,1) -- (3,2) -- (3,0);
    \draw [thick, SkyBlue] (3,2) -- (-3,2) -- (-3,0) -- (-2,1) -- (-3,2);
    \draw [thick, SkyBlue] (-2,1) -- (2,1);
    \draw [thick, SkyBlue] (-3,0) -- (-1.3,-0.7) -- (-0.7,-0.1) -- (-2,1);
    \draw [line width=1mm, DarkRed] (-1,-0.4) -- (-0.7,-0.7);
    \draw [very thick, DarkRed] (-1,-0.6) -- (-1,-0.4) -- (-0.8,-0.4);
    \draw [very thick, DarkRed] (-0.7,-0.5) -- (-0.7,-0.7) -- (-0.9,-0.7);
\end{tikzpicture}
    \caption{Monodromy.}
    \label{mono}
\end{figure}
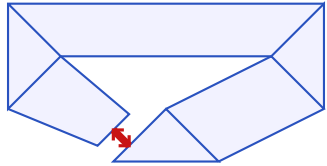

\begin{remark}\label{monodromy}
   We note that due to the existence of monodromy, the glued polygon $\cup_{e\in E}P_e$ and $\Phi_F^E$ might  not be realizable in a vector space. See, e.g., \Cref{mono}.
\end{remark}

\begin{definition}\label{assembled}
    A \emph{glued toric variety} $X$ is a union $X=\cup_i X_i$ of complete toric varieties which are glued along toric invariant subvarieties. A \emph{polarized glued toric variety} $(X,L)$ is a pair of a glued toric variety $X$ and a linearized ample line bundle $L$ on $X$, where $L|_{X_i}$ is also ample for every $i$.
\end{definition}

\begin{remark}
    Note that the resulted toric varieties in Construction~\ref{const} may not be seminormal, and so they are not necessarily the \emph{stable toric varieties (or broken toric varieties)} appeared in \cite{alexeev2002complete,olsson2008compactifying}. Without fixing $\{\phi_f^{e,e'}\}$, one only determines a family of varieties, which may not be isomorphic to each other, see \cite[Section 2.2]{alexeev2015moduli}. 
\end{remark}
We now have all the ingredients to prove our main theorem.
\begin{proof}[{\bf Proof of Theorem~\ref{thm:main}}]
    For each complete fan $\Sigma_e$, let $\rho_f$ be the ray arising from the contraction of $f$. Then $A_1(X_{\mathcal{F}};\QQ)_\mathcal{F}$ is generated by all $f$'s, where the relations are given by \Cref{rel} and $\{\phi_f^{e,e'}\}$. Now, by setting $c(\rho_f)$ to be $w(f)$, we observe that the balancing condition for Minkowski weights \eqref{Minkowski} and for tensegrities are the same, which completes the proof.
\end{proof}


\section{Classical Planar Tensegrities}\label{planar}
In this section, we study the case, where $d=2$ in \Cref{higher} and $F$ has a bounded support. In this case, the framework $\mathcal{F}$ has an underlying graph $G$, which is the classical tensegrity model (see, e.g., \cite{maxwell1864xlv,roth1981tensegrity, connelly1996second,connelly2013tensegrity} for more details on tensegrities of graphs).
\subsection{Toric variety associated to $G$.}\label{sec:graph}
Let $G$ be a simple graph without multiple edges and loops. We denote its edge set with $E(G)$ and its vertex set with $V(G)$. Fix a map $\mathbb{P}:G\rightarrow\RR^2$ such that $\mathbb{P}(V_i)\not=\mathbb{P}(V_j)$ for any pair of distinct vertices $V_i,V_j\in V(G)$. Assume that the images of edges are linear segments without interior intersections. Denote the image of $G$ by $\mathbb{P}(G)$. We require that $\mathbb{P}(G)$ is \emph{integral}, which means that the point $p_i=\mathbb{P}(V_i)$ has integral coordinates and the edge $\mathbb{P}(e)$ has a rational slope (or $\infty$), for every $V_i\in V(G)$ and $e\in E(G)$. Under this assumption, the balancing condition at $p_i$ has the form:
$$
    \sum_{j\not=i}w_{ij}\vect{p_ip_j}^{\ \prim}=0,
$$
where $w_{ij}$ is the stress on the edge connecting $p_i,p_j$ and $\vect{p_ip_j}^{\ \prim}$ is the primitive vector on the ray originating from $p_i$, and pointing to $p_j$. We may add extra edges on $\mathbb{P}(G)$ to obtain a triangulation $\mathcal{T}$ of the convex hull of $\mathbb{P}(G)$. We denote the boundary cycle of the convex hull of $\mathbb{P}(G)$ by $C$.

\medskip
Let $N\cong\ZZ^2$ be the lattice in $\RR^2$, and $\widetilde{N}=N\oplus\ZZ$. Put $\mathbb{P}(G)\subset \widetilde{N}_\RR$ at $(N_\RR,1)$. By taking cones over the triangulation $\mathcal{T}$, one can obtain a non-complete fan. (Note that we take one cone for every triangle in $\mathcal{T}$ and glue them together to obtain a polyhedral fan). We also add a new ray $\rho_0$ generated by the primitive vector $v_0$ along $-\sum_{i=1}^n v_i$ where $v_i=(p_i,1)$. Then, by including the cones $\sigma_{0ij}=\Cone\{v_0,v_i,v_j\}$ for every edge $\{v_i,v_j\}$ of the boundary cycle $C$, we obtain a complete polyhedral fan $\Sigma_{\mathcal{T}}\subset\widetilde{N}_\RR$, together with a complete toric variety $X_{\Sigma_\mathcal{T}}$.
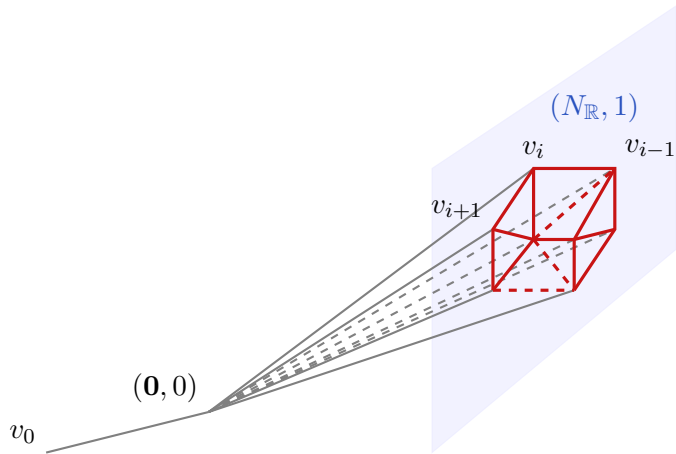
\begin{figure}[h!]
\begin{tikzpicture}[scale=0.27]
    \draw [gray, thick] (0,0) -- (-8,-2);
    \filldraw[blue!10, opacity=0.5] (11,-2) -- (11,12) -- (23,20) -- (23,8) -- cycle;
    \draw [gray, dashed, thick] (0,0) -- (18,8.5);
    \draw [gray, dashed, thick] (0,0) -- (16,8.5);
    \draw [gray, thick] (0,0) -- (14,6);
    \draw [gray, thick] (0,0) -- (14,9);
    \draw [gray, thick] (0,0) -- (16,12);
    \draw [gray, dashed, thick] (0,0) -- (20,12);
    \draw [gray, dashed, thick] (0,0) -- (20,9);
    \draw [gray, thick] (0,0) -- (18,6);
    \draw [very thick, DarkRed] (14,6) -- (14,9) -- (16,12) -- (20,12) -- (20,9) -- (18,6);
    \draw [very thick, dashed, DarkRed] (14,6) -- (18,6);
    \draw [very thick, DarkRed] (14,6) -- (16,8.5);
    \draw [very thick, DarkRed] (14,9) -- (16,8.5);
    \draw [very thick, DarkRed] (16,12) -- (16,8.5);
    \draw [very thick, DarkRed] (20,12) -- (18,8.5);
    \draw [very thick, DarkRed] (20,9) -- (18,8.5);
    \draw [very thick, DarkRed] (18,6) -- (18,8.5);
    \draw [very thick, DarkRed] (16,8.5) -- (18,8.5);
    \draw [very thick, dashed, DarkRed] (16,8.5) -- (20,12);
    \draw [very thick, dashed, DarkRed] (16,8.5) -- (18,6);
    \node [above left] at (0,0) {$({\bf 0},0)$};
    \node [above left] at (14,9) {$v_{i+1}$};
    \node [above] at (16,12) {$v_i$};
    \node [above right] at (20,12) {$v_{i-1}$};
    \node [above left] at (-8,-2) {$v_0$};
    \node [SkyBlue] at (19,15) {$(N_\RR,1)$};
\end{tikzpicture}
\caption{The polyhedral fan associated to $G$. The solid edges (in red) are the edges of $\mathbb{P}(G)$ and the dashed edges are the new edges to obtain a triangulation $\mathcal{T}$.}
    \label{toricconstruction}
\end{figure}
Let $E^+_\mathcal{T}$ be the set of union of the new edges in $\mathcal{T}$ but not in $\mathbb{P}(G)$ and define the set:
$$
    A^1(X_{\Sigma_\mathcal{T}};\QQ)_{\mathbb{P}(G)}:=\{c\in A^1(X_\Sigma;\QQ)\mid c(\tau)=0,\text{ for any } \tau\text{ over }e\in E_{\mathcal{T}}^+\}.
$$
\begin{proposition}\label{bc}
    $A^1(X_{\Sigma_\mathcal{T}};\QQ)_{\mathbb{P}(G)}$ is isomorphic to $A_\mathcal{F}$.
\end{proposition}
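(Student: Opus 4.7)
The plan is to pass through the Minkowski weight description of the Chow cohomology group. By the identification recalled in \Cref{sec:Chow}, $A^1(X_{\Sigma_\mathcal{T}};\QQ)$ is the group of Minkowski weights on the $2$-dimensional cones of $\Sigma_\mathcal{T}$ (the codimension-one cones, since $\dim X_{\Sigma_\mathcal{T}}=3$). These $2$-cones come in two types: the cones $\sigma_{ij}=\Cone\{v_i,v_j\}$ over edges of the triangulation $\mathcal{T}$, and the cones $\tau_{0i}=\Cone\{v_0,v_i\}$ over the vertices of the boundary cycle $C$. An element of $A^1(X_{\Sigma_\mathcal{T}};\QQ)_{\mathbb{P}(G)}$ is a Minkowski weight $c$ vanishing on $\sigma_e$ for $e\in E^+_\mathcal{T}$, so $c$ is determined by its values on $\{\sigma_{ij}:(i,j)\in E(G)\}$ together with the $\tau_{0i}$'s.

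Next, I would propose the candidate isomorphism $\Psi\colon A^1(X_{\Sigma_\mathcal{T}};\QQ)_{\mathbb{P}(G)}\to A_\mathcal{F}$ given by $w(e_{ij})=\lambda_{ij}\,c(\sigma_{ij})$, where $\lambda_{ij}>0$ is the unique scalar with $p_j-p_i=\lambda_{ij}\vect{p_ip_j}^{\,\prim}$. The inverse sends a self-stress $w$ to the Minkowski weight with $c(\sigma_{ij})=w(e_{ij})/\lambda_{ij}$ on edges of $G$ and $c\equiv 0$ on all remaining $2$-cones. At the level of functions these assignments are mutually inverse by construction, so the content of the proposition lies in showing that the balancing condition on one side is equivalent to the self-stress condition on the other.

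The key computation is to unwind the balancing condition \eqref{Minkowski} at a ray $\rho_i$ corresponding to a vertex $p_i$ of $G$. Using the splitting $\widetilde{N}_\RR=N_\RR\oplus\RR$, any $m\in\rho_i^\perp\cap M$ has the form $(\alpha,-\langle\alpha,p_i\rangle)$ for some $\alpha\in\Hom(N,\ZZ)$; for a $2$-cone $\sigma_{ij}=\Cone\{v_i,v_j\}$ containing $\rho_i$ one may take $n_{\sigma_{ij},\rho_i}=v_j$ and compute $\langle m,v_j\rangle=\langle\alpha,p_j-p_i\rangle$. Under the vanishing constraints defining $A^1(X_{\Sigma_\mathcal{T}};\QQ)_{\mathbb{P}(G)}$, the balancing at $\rho_i$ collapses to
\begin{equation*}
\sum_{j\in N_G(i)}(p_j-p_i)\,c(\sigma_{ij})=0,
\end{equation*}
which, after rescaling by the $\lambda_{ij}$'s, is precisely the self-stress equation \eqref{selfstress} at $p_i$. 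It then remains to check the balancing at the assistant ray $\rho_0$: every $2$-cone over $\rho_0$ is of the form $\tau_{0i}$, and a short verification using that $v_0$ is primitive along $-\sum_i v_i$ shows that this condition is automatically consistent with the conditions at the $\rho_i$'s.

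The main obstacle I expect is the bookkeeping for the auxiliary data introduced to complete the fan, namely the assistant ray $\rho_0$ and the $2$-cones $\tau_{0i}$ for boundary vertices. These cones have no counterpart in the underlying tensegrity on $G$, yet they participate in the balancing condition at the boundary rays $\rho_i$. Handling them cleanly---either by extending the vanishing condition in the definition of $A^1(X_{\Sigma_\mathcal{T}};\QQ)_{\mathbb{P}(G)}$ to the $\tau_{0i}$'s in the spirit of the reference rays of \Cref{thm:main}, or by exhibiting a distinguished section pinning their values down uniquely from the self-stress---will be the delicate point; once that is settled, the rest of the argument is the formal matching described above.
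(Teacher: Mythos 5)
Your overall strategy coincides with the paper's: identify $A^1(X_{\Sigma_\mathcal{T}};\QQ)$ with Minkowski weights on the $2$-cones and match the balancing condition \eqref{Minkowski} at each ray $\rho_i$ with the self-stress condition at $p_i$. However, your key computation contains a genuine error: $v_j$ is \emph{not} a valid choice of $n_{\sigma_{ij},\rho_i}$ in general. The lattice $N_{\sigma_{ij}}$ is $\ZZ v_i+\ZZ(\vect{p_ip_j}^{\ \prim},0)$, so the quotient $N_{\sigma_{ij}}/N_{\rho_i}$ is generated by the image of $(\vect{p_ip_j}^{\ \prim},0)$, and the image of $v_j=v_i+\lambda_{ij}(\vect{p_ip_j}^{\ \prim},0)$ is $\lambda_{ij}$ times that generator; it generates only when $p_j-p_i$ is primitive. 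The correct normalized vector is $n_{\sigma_{ij},\rho_i}=v_i+(\vect{p_ip_j}^{\ \prim},0)$ (this is what the paper uses), and then $\langle \widetilde m,n_{\sigma_{ij},\rho_i}\rangle=\langle m,\vect{p_ip_j}^{\ \prim}\rangle$, so the balancing at $\rho_i$ is \emph{literally} $\sum_j c(\sigma_{ij})\vect{p_ip_j}^{\ \prim}=0$, i.e.\ the self-stress equation under the unscaled identification $w_{ij}=c(\sigma_{ij})$. Your rescaled map $w_{ij}=\lambda_{ij}c(\sigma_{ij})$ therefore does not send Minkowski weights to self-stresses: it identifies $A_\mathcal{F}$ with the solution set of the \emph{rescaled} system $\sum_j\lambda_{ij}c_{ij}\vect{p_ip_j}^{\ \prim}=0$, which is a different subgroup from $A^1(X_{\Sigma_\mathcal{T}};\QQ)_{\mathbb{P}(G)}$ whenever the $\lambda_{ij}$ at some vertex are not all equal. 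The fix is simply to drop the rescaling and use the correct $n_{\sigma_{ij},\rho_i}$.

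On the second point you raise, the cones $\tau_{0i}=\Cone\{v_0,v_i\}$: you correctly identify this as the delicate spot, but you leave it unresolved, offering two possible fixes without committing to either. The intended reading of the paper (confirmed by Example~\ref{ex:running}, where the defining equations of $A^1(X_\Sigma;\QQ)_{\mathbb{P}(G)}$ are exactly $D\cdot V(\tau_{0j})=0$) is that the vanishing condition includes the cones $\tau_{0i}$ as well as the cones over $E^+_\mathcal{T}$. With that convention the balancing at $\rho_0$ becomes vacuous, the extra terms at boundary rays $\rho_i$ disappear, and the remaining conditions are exactly the self-stress equations at the $p_i$. A complete proof must state and use this; as written, your argument neither verifies the claimed automatic consistency at $\rho_0$ nor accounts for the $c(\tau_{0i})$ terms in the balancing at boundary vertices.
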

\begin{proof}
    We need to show that the balancing condition on the framework $\mathbb{P}(G)$ and the one on $\Sigma^1_\mathcal{T}$ are equivalent. At $p_i$, assume that the balancing condition on the $\ZZ$-framework is $\sum_{j\not=i}w_{ij}\vect{p_ip_j}^{\ \prim}=0$. For $\tau_{ij}=\Cone\{v_i,v_j\}$, let $c(\tau_{ij})=w_{ij}$. Note that $n_{\tau_{ij},\rho_i}=v_i+\vect{p_ip_j}^{\ \prim}$. Therefore, for any $\widetilde{m}=(m,t)\in \widetilde{M}(\rho_i)$, i.e.~$\langle m,p_i\rangle+t=0$, one has
    \begin{align*}
        \langle \widetilde{m},n_{\tau_{ij},\rho_i}\rangle&=\langle(m,t),(p_i+\vect{p_ip_j}^{\ \prim},1)\rangle\\
        &=\langle m,p_i\rangle+\langle m,\vect{p_ip_j}^{\ \prim}\rangle+t\\
        &=\langle m,\vect{p_ip_j}^{\ \prim}\rangle.
 \end{align*}
This shows that the balancing conditions on  $\mathbb{P}(G)$ and $\Sigma^1_\mathcal{T}$ are equivalent, as desired.\end{proof}

\begin{corollary}
   The framework $\mathbb{P}(G)$ admits a $\QQ$-tensegrity if and only if $A^1(X_\Sigma;\QQ)_{\mathbb{P}(G)}\not=0$.
\end{corollary}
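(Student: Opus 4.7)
The plan is to derive this statement as an immediate corollary of Proposition~\ref{bc}, since all the substantive work — matching the balancing condition on $\mathbb{P}(G)$ with the Minkowski weight condition on $\Sigma^1_\mathcal{T}$ — has already been done there. First I would unwind the definitions: by Subsection~\ref{higher}, the framework $\mathbb{P}(G)$ admits a $\QQ$-tensegrity precisely when there exists a nonzero self-stress on it, which is the same as saying that the abelian group $A_\mathcal{F}$ of self-stresses is nonzero. Then I would apply the isomorphism
\[
A^1(X_{\Sigma_\mathcal{T}};\QQ)_{\mathbb{P}(G)}\cong A_\mathcal{F}
\]
from Proposition~\ref{bc}, under which the first group is nonzero if and only if the second is. This chain of equivalences yields the corollary in one line.

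The only point that deserves a brief comment is that the right-hand side $A^1(X_\Sigma;\QQ)_{\mathbb{P}(G)}$ is defined using a choice of triangulation $\mathcal{T}$ of the convex hull of $\mathbb{P}(G)$, while the property of admitting a tensegrity is intrinsic to $\mathbb{P}(G)$. Because Proposition~\ref{bc} delivers the isomorphism for any such $\mathcal{T}$, the vanishing or non-vanishing of $A^1(X_{\Sigma_\mathcal{T}};\QQ)_{\mathbb{P}(G)}$ is automatically independent of this choice, so no separate argument is required. There is no genuine obstacle in the proof beyond correctly quoting the proposition and the definition of tensegrity.
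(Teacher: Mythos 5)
Your proposal is correct and matches the paper's (implicit) argument: the corollary is a direct consequence of Proposition~\ref{bc}, since $\mathbb{P}(G)$ admits a $\QQ$-tensegrity exactly when $A_\mathcal{F}\neq 0$, which the isomorphism translates into $A^1(X_{\Sigma_\mathcal{T}};\QQ)_{\mathbb{P}(G)}\neq 0$. Your added remark on independence of the triangulation is a sensible observation that the paper leaves unstated.
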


\begin{remark}
 We note that in \cite{karpenkov2021frame}, Karpenkov provided some other criteria for the existence of tensegrities (see \cite[Theorems~2.18 and  5.20]{karpenkov2021frame}). We also note that for the construction of the (irreducible) toric variety, having a triangulation is not necessary. We have added this assumption, as the computation of Chow groups of cocycles can be easily carried out for the $\QQ$-factorial toric varieties. Moreover, adding the assistant ray $\rho_0$ is because we require a complete polyhedral fan to be able to apply the results from the intersection theory, see~\Cref{dual}.
\end{remark}

\subsection{Computing Chow rings.}\label{sec:computation} We now briefly review the computational method of Chow rings via Stanley-Reisner ideals. Let $z_i$ be the free generator corresponding to primitive vector $p_i$, for $i=1,\ldots,|\Sigma_1|$.
Since $\Sigma$ is simplicial, the Chow ring can be computed via
$$
    A^\bullet(X_\Sigma;\QQ)=\frac{\QQ[z_1,\ldots,z_{|\Sigma_1|}]}{\mathcal{SR}+\mathcal{LR}}
$$
where $\mathcal{SR}$ is the corresponding  Stanley-Reisner ideal and $\mathcal{LR}$ is the ideal of linear relations given by the lattice of characters. More explicitly, the Stanley-Reisner ideal $\mathcal{SR}$ is generated by $\{z_{i_1}\cdots z_{i_s}|\Cone\{v_{i_1},\cdots,v_{i_s}\}\not\in\Sigma\}$. Hence, the elements of $\mathcal{SR}$ have degree at least 2, and so
$$
    A^1(X_\Sigma;\QQ)=\frac{\Span_\QQ\{z_1,\ldots,z_{|\Sigma_1|}\}}{\mathcal{LR}}.
$$
In our case, we have that:
\[\mathcal{LR}=\langle\sum _{i=1}^n a_i z_i-\sum_{i=1}^na_iz_0,\sum _{i=1}^n b_iz_i-\sum _{i=1}^n b_iz_0,\sum _{i=1}^n z_i-nz_0\rangle.
\]
      
\medskip

We also note that the condition $A^1(X_\Sigma;\QQ)_{\mathbb{P}(G)}\not=0$ can be directly computed, using the formula in \cite[Proposition 6.4.4]{cox2011toric}. More explicitly,
let $\mult(\sigma)=[N_\sigma:\ZZ v_1+\cdots+\ZZ v_l]$, where $v_i$'s run over all rays of $\sigma$. 
Let $v_{i-1},v_{i+1}$ be the two neighbors of $v_i$. For $\tau_{0i}=\Cone\{v_0,v_i\},\sigma_{0,i,i\pm1}=\Cone\{v_0,v_i,v_{i\pm 1}\}$, one can compute the intersection number by the following formula:
\[
    D_j\cdot V(\tau_{0i})=\begin{cases}
        \ 0 \quad\quad\quad\quad\quad\  \quad\quad\quad j\not\in\{0,i,i\pm 1\}\\
        \displaystyle\frac{\mult(\tau_{0i})}{\mult(\sigma_{0,i,i\pm 1})}\quad\quad\quad j=i\pm 1\\
        \displaystyle\frac{\lambda_0\mult(\tau_{0i})}{\alpha\mult(\sigma_{0,i,i- 1})}\quad\quad j=0\\
        \displaystyle\frac{\lambda_i\mult(\tau_{0i})}{\alpha\mult(\sigma_{0,i,i-1})}\quad\quad j=i
    \end{cases}
\]
where $\alpha,\lambda_0,\lambda_i$ are determined by the linear relation $\alpha v_{i-1}+\lambda_0v_0+\lambda_iv_i+\beta v_{i+1}=0$.

\begin{figure}[h]
\centering
    \begin{subfigure}{0.45\linewidth}
    \centering
 \begin{tikzpicture}[scale=0.4]
    \draw [SkyBlue, dashed, very thick] (0,0) -- (4,0);
    \draw [SkyBlue, dashed, very thick] (0,0) -- (4,2);
    \draw [SkyBlue, dashed, very thick] (0,0) -- (5,1);
    \draw [DarkRed, very thick] (4,0) -- (4,2) -- (6,3) -- (7,0) -- cycle;
    \draw [DarkRed, very thick] (5,1) -- (4,0);
    \draw [DarkRed, very thick] (5,1) -- (4,2);
    \draw [DarkRed, very thick] (5,1) -- (6,3);
    \draw [DarkRed, very thick] (5,1) -- (7,0);
    \draw [SkyBlue, dashed, very thick] (6,3) -- (12,6);
    \draw [SkyBlue, dashed, very thick] (5,1) -- (10,2);
    \draw [SkyBlue, dashed, very thick] (7,0) -- (14,0);
    \draw [SkyBlue, dashed, very thick] (12,6) -- (10,2) -- (14,0) -- cycle;
    \node [below] at (5,1) {$p_5$};
    \node [below right] at (7,0) {$p_4$};
    \node [below left] at (4,0) {$p_3$};
    \node [above left] at (4,2) {$p_2$};
    \node [above right] at (6,3) {$p_1$};
    \node [right] at (12,6) {$p_1'$};
    \node [right] at (14,0) {$p_4'$};
    \node [below] at (10,2) {$p_5'$};
\end{tikzpicture}
\caption{$A^1_{\mathbb{P}(G)}(X_\Sigma;\QQ)\cong\QQ\not=0$}
\label{subfig1}
    \end{subfigure}
\begin{subfigure}{0.45\linewidth}
    \centering
 \begin{tikzpicture}[scale=0.7]
    \draw [SkyBlue, dashed, very thick] (-2,0) -- (4,0);
    \draw [SkyBlue, dashed, very thick] (1,1) -- (5,1);
    \draw [SkyBlue, dashed, very thick] (-2,0) -- (4,2);
    \draw [DarkRed, dashed, very thick] (5,1) -- (7,3);
    \draw [DarkRed, dashed, very thick] (5,1) -- (7,0);
    \draw [DarkRed, very thick] (4,0) -- (4,2) -- (7,3) -- (7,0) -- cycle;
    \draw [DarkRed, very thick] (5,1) -- (6,1);
    \draw [DarkRed, very thick] (5,1) -- (4,0);
    \draw [DarkRed, very thick] (5,1) -- (4,2);
    \draw [DarkRed, very thick] (6,1) -- (7,3);
    \draw [DarkRed, very thick] (6,1) -- (7,0);
    \node [above] at (6,1) {$p_6$};
    \node [above] at (5,1) {$p_5$};
    \node [below right] at (7,0) {$p_4$};
    \node [below left] at (4,0) {$p_3$};
    \node [above left] at (4,2) {$p_2$};
    \node [above right] at (6,3) {$p_1$};
\end{tikzpicture}
    \caption{$A^1_{\mathbb{P}(G)}(X_\Sigma;\QQ)=0$}
     \label{subfig2}
\end{subfigure}
    \caption{The computational examples in Example~\ref{ex:running}.}
    \label{fig.example}
\end{figure}
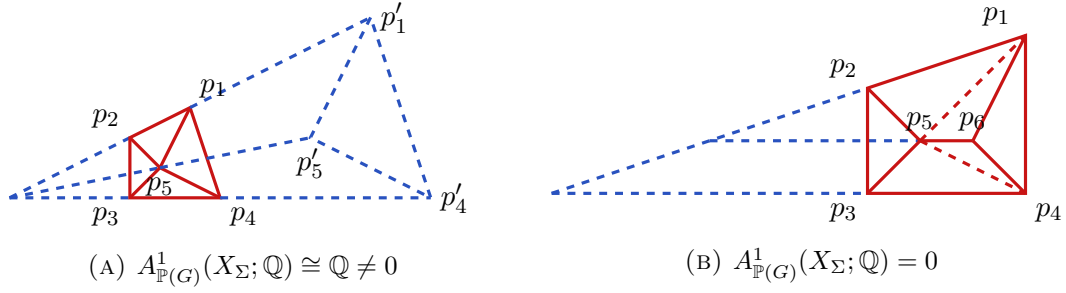

\begin{example}\label{ex:running}

    (a) Consider the $\ZZ$-framework with
$$
    p_1=(1,2),p_2=(-1,1),p_3=(-1,-1),p_4=(2,-1),p_5=(0,0),
$$
as shown in \Cref{fig.example}(A). Then $v_0=(-1,-1,-5)$, and the linear relation is
    \[
    \begin{cases}
        0=-z_0+z_1-z_2-z_3-2z_4\\
        0=-z_0+2z_1+z_2-z_3-z_4\\
        0=-5z_0+z_1+z_2+z_3+z_4+z_5
    \end{cases}
\]
So $A^1(X_\Sigma;\QQ)=\Span_\QQ\{D_0,D_1,D_2\}$.
The multiplicities of walls and full-dimensional cones are:
\smallskip

\begin{center}
\begin{tabular}{|c c c c|}
\hline
    $\mult(\tau_{01})=1$ &$\mult(\tau_{02})=2$ &$\mult(\tau_{03})=6$ &$\mult(\tau_{04})=3$\\

    $\mult(\tau_{12})=1$ &$\mult(\tau_{23})=2$ &$\mult(\tau_{34})=3$ &$\mult(\tau_{41})=1$\\

    $\mult(\tau_{15})=1$ &$\mult(\tau_{25})=1$ &$\mult(\tau_{35})=1$ &$\mult(\tau_{45})=1$\\

    $\mult(\sigma_{012})=14$ &$\mult(\sigma_{023})=12$ &$\mult(\sigma_{034})=18$ &$\mult(\sigma_{041})=21$\\

    $\mult(\sigma_{125})=3$ &$\mult(\sigma_{235})=2$ &$\mult(\sigma_{345})=3$ &$\mult(\sigma_{415})=5$\\
\hline
\end{tabular}
\end{center}
\smallskip

The intersection numbers are:
\smallskip

\begin{center}
\scalebox{0.85}{
\begin{tabular}{|c|c c c c c c c c c c c c|}
\hline
        &$V(\tau_{01})$ &$V(\tau_{02})$ &$V(\tau_{03})$ &$V(\tau_{04})$ &$V(\tau_{12})$ &$V(\tau_{23})$ &$V(\tau_{34})$ &$V(\tau_{41})$ &$V(\tau_{15})$ &$V(\tau_{25})$ &$V(\tau_{35})$ &$V(\tau_{45})$\\
\hline
    $D_0$ &1/42 &1/21 &1/6 &1/14 &1/14 &1/6 &1/6 &1/21 &0 &0 &0 &0\\
\hline
    $D_1$ &0 &1/7 &0 &1/7 &1/21 &0 &0 &1/35 &-1/15 &1/3&0&1/5\\
\hline
    $D_2$  &1/14 &-1/14 &1/2 &0 &-1/42 &0 &0 &0 &1/3 &-1/6 &1/2&0\\
\hline
    $D_3$ &0 &1/6 &0 &1/6 &0 &-1/6 & -1/6 &0 &0 &1/2&1/6 &1/3\\
\hline
    $D_4$ &1/21 &0 &1/3 &1/21 &0 &0 &0 &1/105 &1/5 &0 &1/3 &1/15\\
\hline
    $D_5$ &0 &0 &0 &0 &1/3 &1 &1 &1/5& -7/15 &-2/3 &-1 &-3/5\\
\hline
\end{tabular}
}
\end{center}

\medskip

Let $D=c_0D_0+c_1D_1+c_2D_2$, then the linear equation system $\{D\cdot V(\tau_{0j})=0,j=1,\ldots,4\}$ has nonzero solutions, which is  $A^1(X_\Sigma;\QQ)_{\mathbb{P}(G)}=\{\lambda(-6D_0+3D_1+2D_2),\lambda\in\QQ$\}. 

\medskip

(b) For the $\ZZ$-framework with
$$
    p_1=(2,2),p_2=(-1,1),p_3=(-1,-1),p_4=(2,-1),p_5=(0,0),p_6=(1,0)
$$
and edges as in Figure~\ref{fig.example}(B), we have that $A^1(X_\Sigma;\QQ)_{\mathbb{P}(G)}=0$. For example, let $D=\sum_{i=0}^3c_iD_i$. Then $\{D\cdot V(\tau_{0j})=0,j=1,\ldots,4\}$ has solutions $\{D=\lambda(-7D_0+3D_1+2D_2),\lambda\in\QQ\}$. However, $D\cdot V(\tau_{15})=0$ forces $\lambda=0$.

\medskip

We note that the computational results in this example agree with the conclusions in \cite[Example 1.2]{karpenkov2021frame} and \cite[Proposition 5.11]{karpenkov2021frame}.
\end{example}

\begin{remark}
    In general, in the graph $\mathbb{P}(G)$, the edges may intersect in points other than vertices. We note that, to deal with this problem, instead of using classical toric varieties, the theory of multi-fans in \cite{hattori2003theory,Masuda} can be applied. Moreover, to work over $\RR$ instead of $\QQ$ or $\ZZ$, the techniques developed in \cite{battaglia2001generalized} are relevant.
\end{remark}

\bibliographystyle{alpha}
\bibliography{reference}
\smallskip

\noindent 
\footnotesize{\textbf{Authors' addresses:}
\smallskip

\noindent
(Mohammadi) Department of Computer Science, KU Leuven, Celestijnenlaan 200A, B-3001 Leuven, Belgium\\ 
   Department of Mathematics, KU Leuven, Celestijnenlaan 200B, B-3001 Leuven, Belgium 
\\ Department of Mathematics and Statistics,
 UiT – The Arctic University of Norway, 9037 Troms\o, Norway
 \\ E-mail address: {\tt fatemeh.mohammadi@kuleuven.be}
\\
\noindent (Wu) E-mail address: {\tt xianwu.ag@gmail.com}
}
\end{document}